\newcommand{\Prob}{\mathds{P}}
\newcommand{\E}{\mathds{E}}
\newcommand{\V}{\mathds{V}}
\newcommand{\C}{\mathbb{C}}
\newcommand{\R}{\mathbb{R}}
\newcommand{\Z}{\mathbb{Z}}
\newcommand{\N}{\mathbb{N}}
\newcommand{\abs}[1]{|{#1}|} 
\newcommand{\bigabs}[1]{\left|{#1}\right|} 
\newcommand{\one}{\mathds{1}}
\newcommand{\Acal}{\mathcal{A}}
\newcommand{\Pcal}{\mathcal{P}}
\newcommand{\Ncal}{\mathcal{N}}
\newcommand{\Lcal}{\mathcal{L}}
\newcommand*{\defeq}{\mathrel{\vcenter{\baselineskip0.5ex \lineskiplimit0pt
                     \hbox{\scriptsize.}\hbox{\scriptsize.}}}%
                     =}
\newcommand{\supnorm}[1]{\|#1\|_{\infty}}
\newcommand{\onenorm}[1]{\|#1\|_{1}}
\newcommand{\norm}[1]{\|#1\|}
\newcommand{\ubar}[1]{\underline{#1}}
\newcommand{\oneto}[1]{[{#1}]}
\DeclareMathOperator{\diam}{diam}
\DeclareMathOperator{\dist}{dist}
\theoremstyle{plain}
\newtheorem{lemma}{Lemma}
\newtheorem{theorem}[lemma]{Theorem}
\newtheorem{corollary}[lemma]{Corollary}
\theoremstyle{definition}
\newtheorem{example}[lemma]{Example}
\newtheorem{remark}[lemma]{Remark}
\theoremstyle{remark}
\begin{document}
\title[Weak Dependence by Moments]
{The Central Limit Theorem for Weakly Dependent Random Variables by the Moment Method}
\author[Michael Fleermann]{Michael Fleermann}
\author[Werner Kirsch]{Werner Kirsch}
\begin{abstract}
In this paper, we derive a central limit theorem for collections of weakly correlated random variables indexed by discrete metric spaces, where the correlation decays in the distance of the indices. The correlation structure we study depends solely on the separability of mixed moments. Our investigation yields a new proof for the CLT for $\alpha$-mixing random variables, but also non-$\alpha$-mixing random variables fit within our framework, such as MA($\infty$) processes. In particular, our results can be applied to ARMA($p,q$) process with independent white noise.
\end{abstract}
\keywords{central limit theorem, mixing, moving average}
\subjclass[2010]{Primary: 60F05. 37A25. 37M10. Secondary: 60G07.} 
\maketitle

\section{Introduction}
\label{sec:Intro}
The study of limit theorems for weakly dependent random variables has a long history of extremely extensive and fruitful research. For an overview of many facets of this theory we refer the reader to the monograph \cite{Dedecker:2007}. A particularly prominent concept to measure dependence between random variables consists of constructing so-called "strong mixing conditions" for the deviations of probabilities of events in the $\sigma$-algebras which are generated by the random variables, see \cite{Bradley:2005} for details. This research thread was initiated by the paper \cite{Rosenblatt:1956}, in which the concept of "strong mixing" or "$\alpha$-mixing" was introduced to derive a central limit theorem for stationary sequences of random variables. Generalizations and extensions of these findings are included, for example, in the works of \cite{Ibragimov:1962}, \cite{Bolthausen:1982} and \cite{Doukhan:Louhichi:1999}. What all these studies have in common is that their limit theorems are mainly formulated for stationary sequences indexed by $\N$, $\Z$ or $\Z^d$.
The purpose of this paper is to provide another model of weak dependence which solely depends on the separability of mixed moments of random variables, see our condition (M4) below. Naturally -- since the dependence condition is formulated in the language of moments -- we employ the method of moments to prove our main result, Theorem~\ref{thm:CLT}. In contrast to the previous investigations mentioned above, we do not require stationarity of the variables involved, and our index sets are general metric spaces. It turns out that multiple models such as $\alpha$-mixing sequences and MA($\infty$) random fields fit our framework.

This paper is organized as follows: In Section~\ref{sec:SetupResults} we present the setup and the main result of this paper, Theorem~\ref{thm:CLT}. In Section~\ref{sec:MAinfty} we introduce the first class of stochastic processes that matches our setup, the so-called MA($\infty$) processes, which are moving averages of infinite time horizon. In Corollaries~\ref{cor:MAInftyN} and~\ref{cor:MAInftyZd} we establish two exemplary applications of Theorem~\ref{thm:CLT} to MA($\infty$) processes. In Section~\ref{sec:Mixing}, we show that $\alpha$-mixing random variables also fit within our framework and derive a corresponding central limit theorem in Corollary~\ref{cor:Mixing}. Lastly, in Section~\ref{sec:proof} we carry out the proof of Theorem~\ref{thm:CLT}.

\subsection*{Acknowledgements} The first author thanks Felix Spangenberg for valuable literature hints on stochastic time series.

\section{Setup and Main Results}
\label{sec:SetupResults}
We assume that for all $n\in\N$, $(X^{(n)}_t)_{t\in T_n}$ is a collection of real-valued random variables, where $(T_n,d_n)$ is a finite metric space with $\N_0$-valued metric $d_n$. As an example, we think of $T_n\subseteq \Z^m$, $d_n(z,z')=\max_{i\in\oneto{m}}\abs{z_i-z'_i}$, where for all $m\in\N$ we write $\oneto{m}\defeq\{1,\ldots,m\}$. For notational convenience, we write 
$X_t=X^{(n)}_t$ and $d=d_n$ when $n$ is clear from the context. 

We impose the following moment assumptions on these random variables:
\begin{enumerate}[(M1)]
\item All $X_t$ are centered.
\item $\frac{1}{\abs{T_n}}\sum_{t\in T_n}\E X_t^2$ converges as $n\to\infty$.
\item $\forall\,k\in\N:\,\exists\, M_k\geq 0:\,\forall\, n\in\N:\,\forall\,t\in T_n:\ \E\abs{X_t}^k\leq M_k$.
\end{enumerate}

In addition to these moment assumptions, we make an additional assumption on the separability of mixed moments. To formulate this condition, we introduce some concepts and notation we will use for the remainder of this paper. A family $S\subseteq T_n$ is called \emph{$a$-connected}, where $a\in\N_0$, if for all $s,s'\in S$ we find finitely many elements $s_1,\ldots,s_{\ell}\in S$ such that with $s_0\defeq s$ and $s_{\ell+1}\defeq s'$ it holds $d(s_{i-1},s_{i})\leq a$ for all $i\in\oneto{\ell+1}$. Further, if $I$ is an index set and for all $i\in I$, $t_i\in T_n$, then we write $t_I \defeq (t_i\,|\,i\in I)$ as a family (or vector) of elements in $T_n$. Now given a vector $\ubar{t}\defeq (t_1,\ldots,t_k) \in T_n^k$ and an $a\in\N_0$, let $\pi(\ubar{t} \, | \, a)\in\Pcal(k)\defeq\{\pi\,|\, \pi \text{ partition of } \oneto{k}\}$ be the unique partition such that for all $B\in\pi(\ubar{t} \, | \, a)$, $t_B$ is $a$-connected in $T_n$ and $d(t_B,t_{B'})> a$ for all $B\neq B'\in\pi(\ubar{t} \, | \, a)$. Thus, for any tuple $\ubar{t}$, the partition $\pi(\ubar{t} \, | \, a)$ allows us to decompose $\ubar{t}$ into its $a$-connected parts. The moment separation condition we impose is 
\begin{equation}
\label{eq:momentseperation}
	\bigabs{\E X_{t_1}\cdots X_{t_k} - \prod_{B\in\pi(\ubar{t} \, | \, a)}\E X_{t_B}} \leq C_k\gamma (a),
	\tag{M4}
\end{equation}
where $C_k$ is a constant depending on $k$ (and the underlying process) but not on $n$, and $\gamma:\N_0 \to \R_+$ is a function such that $\gamma(a)\to 0$ as $a\to\infty$. 
We also impose the following conditions on the spaces $T_n$ and $\gamma$, where $B_a(t)\subseteq T_n$ refers to the ball of radius $a$ around $t$ in $T_n$:
\begin{enumerate}[(S1)]
\item For all $k\in\N$: $\abs{T_n}^k\gamma(n)\to 0$ as $n\to\infty$.	
\item $\exists\, C,c > 0:\, \forall\,n,a\in\N: b_n(a)\leq C\abs{T_a}^c$, where $b_n(a)\defeq \max_{t\in T_n} \abs{B_a(t)}$
\item $\forall\,n\in\N: \abs{T_n}\geq n$, and $\diam(T_{n+1})\geq \diam(T_n)$.
\item $\forall\,\ell\in\N: \exists\, (a_n)_n$ with $a_n\nearrow\infty$, $a_n\in\{1,\ldots,n\}$, such that for finally all $n\in\N$: $\abs{T_{a_n}}^{\ell}/\abs{T_n} \in(c',C')$ for some constants $0<c'<C'$ (possibly depending on $\ell$).	
\end{enumerate}
Condition (S1) requires a decay of $\gamma$ which is superpolynomial in $\abs{T_n}$. Condition (S2) is an evolvement condition which states that the cardinality of $T_a$ should evolve similarly to the cardinality of balls of radius $a$. Condition (S3) ensures a minimal growth of $\abs{T_n}$ and therefore is also an evolvement condition. Condition (S4) states that the growth of $T_n$ can be recuperated by the growth of arbitrary $\ell$-th powers of former spaces $T_m$, $m\leq n$. In total, the conditions (S1) through (S4) pertain mostly to the cardinality of the spaces $T_n$, but not so much to their topological structure, as conditions formulated in \cite{Bolthausen:1982}.
Lastly, we want to point out that although the conditions (S1) through (S4) seem cumbersome, they are actually very easily verified in applications, see the proofs of the Corollaries~\ref{cor:MAInftyN},~\ref{cor:MAInftyZd} and~\ref{cor:Mixing}.

\begin{theorem}
\label{thm:CLT}
Under the conditions (M1), (M2), (M3), (M4), (S1), (S2), (S3), and (S4) we find that
\begin{equation}
\label{eq:sigmalimit}
\sigma^2\defeq \lim_{n\to\infty}\frac{1}{\abs{T_n}}\sum_{s,t\in T_n}\E X_s X_t	\qquad \text{exists in $\R_+$}
\end{equation}
and
\begin{equation}
\label{eq:CLT}
\frac{1}{\sqrt{\abs{T_n}}}\sum_{t\in T_n} X_t	\xrightarrow[n\to\infty]{} \Ncal(0,\sigma^2) \qquad \text{weakly,}
\end{equation}
where  $\Ncal(0,\sigma^2)\defeq \delta_0$ if $\sigma^2 = 0$.
\end{theorem}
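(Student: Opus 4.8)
The plan is to prove both assertions by the method of moments. Since a centered Gaussian law $\Ncal(0,\sigma^2)$ is determined by its moments (Carleman's condition holds, the moments growing only like those of a Gaussian), it suffices to show that for every $k\in\N$ the moments of $S_n\defeq\abs{T_n}^{-1/2}\sum_{t\in T_n}X_t$ converge to the $k$-th moment of $\Ncal(0,\sigma^2)$, that is, to $0$ for odd $k$ and to $(k-1)!!\,\sigma^{k}$ for even $k$. Writing $\ubar t=(t_1,\dots,t_k)\in T_n^k$ we expand
\begin{equation*}
\E S_n^k=\frac{1}{\abs{T_n}^{k/2}}\sum_{\ubar t\in T_n^k}\E X_{t_1}\cdots X_{t_k},
\end{equation*}
and the entire proof consists in identifying which tuples $\ubar t$ contribute in the limit.

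The central device is (M4): fixing a scale $a=a_n$, we replace each joint moment $\E X_{t_1}\cdots X_{t_k}$ by the product $\prod_{B\in\pi(\ubar t\,|\,a_n)}\E X_{t_B}$ over its $a_n$-connected clusters, at the cost of an error $C_k\gamma(a_n)$ per tuple and hence a total error of order $\abs{T_n}^{k/2}\gamma(a_n)$. By (M1) every cluster that is a singleton forces its factor $\E X_{t_B}$ to vanish, so only partitions $\pi(\ubar t\,|\,a_n)$ all of whose blocks have size $\ge 2$ survive; in particular the number $r$ of blocks satisfies $r\le k/2$. To count the tuples whose clusters realise a given partition with blocks $B_1,\dots,B_r$, each cluster is pinned by one anchor ($\le\abs{T_n}$ choices) while its remaining $\abs{B_j}-1$ elements lie within distance $(\abs{B_j}-1)a_n$ of that anchor, contributing at most $b_n((\abs{B_j}-1)a_n)^{\abs{B_j}-1}$ choices, and each factor $\E X_{t_{B_j}}$ is bounded by $M_{\abs{B_j}}$ via Hölder and (M3). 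This yields a contribution of order $\abs{T_n}^{\,r-k/2}\prod_j b_n((\abs{B_j}-1)a_n)^{\abs{B_j}-1}$.

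For partitions carrying a block of size $\ge 3$ (equivalently $r<k/2$) the prefactor $\abs{T_n}^{\,r-k/2}$ is a negative power of $\abs{T_n}$ that must beat the ball-growth factors. This is the main obstacle: the scale $a_n$ has to be large enough that $\abs{T_n}^{k/2}\gamma(a_n)\to 0$ (so the (M4) replacement is harmless), yet small enough that the factors $b_n((\abs{B_j}-1)a_n)$ remain sub-polynomial in $\abs{T_n}$. Reconciling these competing demands is where (S1)--(S4) are brought to bear: (S4) furnishes, for any prescribed $\ell$, a sequence $a_n\nearrow\infty$ with $\abs{T_{a_n}}^{\ell}\asymp\abs{T_n}$; through (S2), which bounds $b_n(ca_n)\le C\abs{T_{ca_n}}^{c}$, this keeps the ball factors sub-polynomial, while (S1) together with the minimal-growth condition (S3) is designed to force $\abs{T_n}^{k/2}\gamma(a_n)\to 0$. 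Choosing $\ell$ larger than every block size occurring for the fixed $k$ at hand then renders each non-pair contribution a strictly negative power of $\abs{T_n}$, hence negligible. Consequently, for odd $k$ no admissible partition attains $r=k/2$ and the limit is $0$, while for $k=2m$ only the $(2m-1)!!$ pairings survive.

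It remains to identify the surviving pairing contributions and, with them, the existence of $\sigma^2$. Up to the negligible event that two pairs are themselves close, each pairing contributes the factorised quantity $\prod_{\text{pairs}}\bigl(\abs{T_n}^{-1}\sum_{d(s,t)\le a_n}\E X_sX_t\bigr)$. The $k=2$ instance of the analysis shows that $\E S_n^2=\abs{T_n}^{-1}\sum_{s,t}\E X_sX_t$ differs from $\abs{T_n}^{-1}\sum_{d(s,t)\le a_n}\E X_sX_t$ only by the far-pair sum, which is of order $\abs{T_n}\gamma(a_n)\to 0$; combined with (M2), which governs the diagonal, and the evolvement conditions, this yields the existence of the limit $\sigma^2\in\R_+$ asserted in \eqref{eq:sigmalimit}. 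Feeding this back, each pairing contributes $(\sigma^2)^{m}$ in the limit, so $\E S_n^{2m}\to(2m-1)!!\,\sigma^{2m}$ and $\E S_n^{2m+1}\to 0$, which are exactly the moments of $\Ncal(0,\sigma^2)$ (and of $\delta_0$ when $\sigma^2=0$). The method of moments then delivers the weak convergence \eqref{eq:CLT}.
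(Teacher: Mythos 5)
Your route is the same as the paper's: expand $\E S_n^k$ for $S_n=\abs{T_n}^{-1/2}\sum_{t\in T_n}X_t$, use (M4) at a scale $a_n$ furnished by (S4) to replace each mixed moment by its product over $a_n$-connected clusters, kill singleton clusters by (M1), kill non-pair cluster structures by counting, and let the pairings produce $(k-1)!!\,\sigma^k$. However, two steps do not hold up as written. The first concerns the existence of the limit \eqref{eq:sigmalimit}, which is half of the theorem's assertion and which your final step also needs (each pair factor equals $\abs{T_n}^{-1}\sum_{s,t\in T_n}\E X_sX_t+o(1)$, and without convergence of this quantity you only get $\E S_n^{2m}=(2m-1)!!\,(\E S_n^2)^m+o(1)$, not moment convergence). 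Your justification --- the $k=2$ analysis plus (M2) plus ``the evolvement conditions'' --- is circular: (M2) gives convergence of the diagonal part only, and nothing you write shows that the off-diagonal near-pair sum $\abs{T_n}^{-1}\sum_{0<d(s,t)\le a_n}\E X_sX_t$ converges as $n\to\infty$. The paper devotes a separate argument to exactly this point: it decomposes the off-diagonal sum into distance shells $\{d(s,t)=a\}$, bounds the shell-$a$ contribution by $C_2\,b_n(a)\gamma(a)\le C_2C\abs{T_a}^c\gamma(a)$ using (M4) and (S2), and then uses (S1) and (S3) to dominate this by $1/a^2$ for large $a$, uniformly in $n$. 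Whatever one thinks of its details, your sketch contains no substitute for that step; existence of $\sigma^2$ is asserted, not proved.

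The second gap is in your cluster count. You pin each cluster $B_j$ at an anchor and place its remaining elements in the ball of radius $(\abs{B_j}-1)a_n$ around that anchor; this containment is a correct geometric fact, but it forces your bound to involve $b_n((\abs{B_j}-1)a_n)$, and your claim that (S2) keeps this ``sub-polynomial in $\abs{T_n}$'' is not justified by the hypotheses. Indeed, (S2) only yields $b_n((\abs{B_j}-1)a_n)\le C\abs{T_{(\abs{B_j}-1)a_n}}^c$, and no axiom relates $\abs{T_{(k-1)a_n}}$ to $\abs{T_n}$: the sequence $(a_n)$ from (S4) is calibrated so that $\abs{T_{a_n}}^\ell\asymp\abs{T_n}$, but this says nothing about $\abs{T_{ma_n}}$ for $m\ge 2$, and the cardinalities $\abs{T_m}$ are not assumed monotone or of regular growth in $m$ ((S3) only gives $\abs{T_n}\ge n$ and monotonicity of \emph{diameters}). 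The paper's Lemma~\ref{lem:combinatorics} avoids this entirely by building each cluster through a chain: every new element is chosen within distance $a_n$ of an element already placed, so only $b_n(a_n)\le C\abs{T_{a_n}}^c\lesssim\abs{T_n}^{c/\ell}$ ever appears, after which the choice $\ell=\lceil c(k+1)\rceil+1$ makes every contribution with a block of size at least $3$ vanish (and the same count controls the error terms \eqref{eq:squareseparation} and \eqref{eq:firstequality} in the pairing case, including your ``two pairs are close'' event, which inherits the same defect in your version). Repair the count in this chain-based way and supply a genuine proof of \eqref{eq:sigmalimit}, and your argument becomes the paper's proof.
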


Before we begin with the proof in Section~\ref{sec:proof}, we provide some examples in the following two sections.

\section{MA($\infty$)-processes}
\label{sec:MAinfty}
In this example, we analyze the behavior of general $MA(\infty$) processes and show how they fit into our setup. We assume $(Y_t)_t$ to be a sequence of independent random variables with $\E Y_t =0$ , $\V Y_t=1$ and $\E\abs{Y_t}^k\leq M'_k$ for all $k\in\N$ and all indices $t\in T$, where $(T,d)$ is a metric space, $(T,+)$ is an abelian group, and the metric $d$ is translation invariant.  As an example, consider $(\Z^m,+,d)$ with $d(x,z) = \max_{i\in m}\abs{x_i-y_i}$. We define for all $t\in T$
\begin{equation}
\label{eq:MAdef}
X_t \defeq \sum_{s\in T}c_s Y_{t-s} = \sum_{s\in T} c_{t-s} Y_s,
\end{equation}
where $(c_s)_s$ is a sequence in $\ell^1(T)$ and call $(X_t)$ an MA($\infty$)-process.
In contrast to many treatments of these processes, we do not require $(Y_t)$ to be i.i.d., so that the process $X_t$ need not be stationary. Also, in the literature MA($\infty$)-processes as we defined them are sometimes called \emph{linear processes}, cf. \cite{Brockwell:Davis:1991} and \cite{Brockwell:Davis:2016}. There, it is also illustrated that in fact many different times series models possess an MA($\infty$) representation.

In \eqref{eq:MAdef} it is straightforward to see that $X_t$ can be regarded as an almost sure limit, but also as $\Lcal_p$-limit for all $p\geq 1$. In particular, this holds true for products of the variables $X_t$, and expectations of these expressions may be evaluated summand-wise. To be more precise, if $t_1,\ldots,t_k\in T$, then we obtain the following calculation, which we explain step-by-step below:
\begin{align*}
&\E X_{t_1}\cdots X_{t_k} 
\ = \ \sum_{\ubar{s}\in T^k} c_{t_1-s_1}\cdots c_{t_k-s_k} \E Y_{s_1}\cdots Y_{s_k}\\
&= \sum_{\kappa\in\Pcal(k)} \sum_{\substack{\ubar{s}\in T^k \\ \ker \ubar{s}=\kappa }} c_{\ubar{t}-
\ubar{s}} \E Y_{\ubar{s}}\ =\ \sum_{\kappa\in\Pcal(k)_{\geq 2}} \sum_{\substack{\ubar{s}\in T^k \\ \ker \ubar{s}=\kappa }} c_{\ubar{t}-
\ubar{s}} \E Y_{\ubar{s}}\\
&=\sum_{\kappa\in\Pcal(k)_{\geq 2}} \sum_{\substack{\ubar{s}\in T^k \\ \ker \ubar{s}=\kappa }}  \prod_{K\in\kappa}c_{t_K-
s_K}\E Y_{s_K}\ =\ \sum_{\kappa\in\Pcal(k)_{\geq 2}} \prod_{K\in\kappa}\sum_{s^K\in T} c_{t_K-s^K}\cdot \E Y_{s^K}^{\abs{K}}.
\end{align*}
In the first step we used the definition in \eqref{eq:MAdef} and the discussion below the definition. In the second step, $\ker \ubar{s}$ refers to the \emph{kernel} of the tuple $\ubar{s}$, which is the unique partition on $\oneto{k}$ with $i\sim j \Leftrightarrow s_i=s_j$. In other words, in the second step we sort the tuples according to which places in the tuple contain equal or different entries. In the third step, $\Pcal(k)_{\geq 2}$ refers to the partitions with blocks of size $2$ or larger. We may restrict ourselves to these partitions, since the variables $Y_i$ are independent and centered, so that $\E Y_{\ubar{s}}$ vanishes whenever one of the indices in $\ubar{s}$ appears only once. In the fourth step, we used the independence of the variables $Y_i$. In the fifth step, we use that for all $K\in\kappa$ and $i,j\in K$, $s_i=s_j$. Here, $t_K$ is the vector $(t_i, i\in K)$, whereas $s^K$ is just a single element. The difference $t_K-s^K$ is then to be interpreted componentwise as $(t_i - s^K, i\in K)$. In particular, it immediately becomes clear from the above calculation that for any $t,t'\in T$ and $k\in\N$:
\begin{align}
&\E X_t = 0, \qquad \E X_t^2 = \sum_{s\in T} \abs{c_s}^2, \qquad \E X_tX_{t'} = \sum_{s \in T} c_{t-s}c_{t'-s}, \quad\text{ and}\notag\\
&\E\abs{X_t}^k\ \leq \sum_{\kappa\in\Pcal(k)_{\geq 2}} \prod_{K\in\kappa}\sum_{s^K\in T} \abs{c_{t-s^K}}^{\abs{K}}\cdot M'_{\abs{K}} \label{eq:MAinftyFirstMoments} \\
&\qquad\quad= \ \sum_{\kappa\in\Pcal(k)_{\geq 2}}\left(\prod_{K\in\kappa}M_{\abs{K}}'\right)\left(\prod_{K\in\kappa}\sum_{s^K\in T} \abs{c_{s^K}}^{\abs{K}}\right) =: M_k.\notag
\end{align}

To evaluate to what extent condition (M4) holds, we assume that we have a mixed moment $\E X_{t_1}\cdots X_{t_k}$ where $\ubar{t}$ is $a$-separated by a partition $\pi$, meaning that for all $B\in\pi$, $t_B$ is $a$-connected and for all $B\neq B'\in\pi$, $d(t_B,t_{B'})> a$. We now calculate for (M4):
\begin{align}
&\E X_{t_1}\cdots X_{t_k} - \prod_{B\in\pi} \E X_{t_B}\ =\ \sum_{\ubar{s}\in T^k} c_{\ubar{t}-\ubar{s}}\E Y_{\ubar{s}} - \prod_{B\in\pi}\sum_{s_B\in T^{\abs{B}}} c_{t_B-s_B}\E Y_{s_B}\notag\\
&=\sum_{\ubar{s}\in T^k} c_{\ubar{t}-\ubar{s}}\left(\E Y_{\ubar{s}} - \prod_{B\in\pi} \E Y_{s_B}\right)\ =\ \sum_{\kappa\in\Pcal(k)_{\geq 2}} \sum_{\substack{\ubar{s}\in T^k\\ \ker(s)=\kappa}} c_{\ubar{t}-\ubar{s}}\left(\E Y_{\ubar{s}} - \prod_{B\in\pi} \E Y_{s_B}\right)\notag\\
&\underset{\abs{\ldots}}{\leq} 2 M'_k \cdot \sum_{\kappa\in\Pcal(k)_{\geq 2}\backslash\{\pi\}} \prod_{K\in\kappa} \sum_{s^K\in T} \prod_{i\in K}\abs{c_{t_i-s^K}}.\label{eq:calc}
\end{align}
In the very last product we can expect a decay whenever there are $i\neq j$ in $K$ which lie in different blocks of $\pi$, since then $t_i$ and $t_j$ are more than $a$ units apart, so the product $\abs{c_{t_i-s^K}}\abs{c_{t_j-s^K}}$ is small, since $(c_s)_s$ lies in $\ell^1(T)$. To make this more precise, assume that in $K$ we find elements of $\ell\in\{1,2,\ldots,\abs{K}\}$ different blocks of $\pi$ and we call a selection of these $i_1,\ldots,i_{\ell}$. Then the $t_{i_j}$ are pairwise $a$-disconnected and it holds
\begin{equation}
\label{eq:decayofsum}
\sum_{s^K\in T}\prod_{i\in K}\abs{c_{t_i-s^K}}\ \leq\ \ell \onenorm{(c_s)}^{\abs{K}-\ell+1}\onenorm{(c_s)_{s\in B^c_{a/2}(0)}}^{\ell-1}.
\end{equation}
We prove inequality \eqref{eq:decayofsum} by induction over $\ell$: The case is clear for $\ell=1$ by using Hölder and $\onenorm{(c_s)} \geq \norm{(c_s)}_p$ for any $p\geq 1$. For the induction step $\ell\to\ell+1$ (in which we must have $\abs{K}\geq 2$) we write $K = \{i_1,\ldots,i_{\ell+1}\}\,\dot{\cup}\, \{i_r\, |\, r\in R\}$ for a suitable set $R$ (then $\ell+1+\abs{R}=\abs{K}$) and where $i_1,\ldots,i_{\ell+1}\in K$ stem from different blocks in $\pi$ (which entails that $t_{i_1},\ldots,t_{i_{\ell+1}}$ are pairwise $a$-disconnected), and the $i_r$ are the remaining arbitrary indices. Then
\begin{align}
\sum_{s^K\in T} \prod_{i\in K}\abs{c_{t_i-s^K}}
& = \sum_{\substack{s^K\in T \\ t_{i_1}-s^K \in B_{a/2}(0)}}\abs{c_{t_{i_1}-s^K}}	\cdot\prod_{m=2}^{\ell+1}\abs{c_{t_{i_m}-s^K}} \one_{B^c_{a/2}(0)}(t_{i_m}-s^K)\cdot \prod_{r\in R}\abs{c_{t_r-s^K}}\notag\\
&\qquad + \sum_{s^K\in T}\underbrace{\abs{c_{t_{i_1}-s^K}} \one_{B_{a/2}^c(0)}(t_{i_1}-s^K)}_{\leq \onenorm{(c_s)_{s\in B_{a/2}^c(0)}}}	\cdot\prod_{m=2}^{\ell+1}\abs{c_{t_{i_m}-s^K}} \cdot \prod_{r\in R}\abs{c_{t_r-s^K}}\notag\\
&\leq \norm{(c_s)}_{\abs{K}} \cdot \norm{(c_s)_{s\in B^c_{a/2}(0)}}_{\abs{K}}^{\ell} \cdot \norm{(c_s)}_{\abs{K}}^{\abs{K}-(\ell+1)}\notag\\
&\qquad + \onenorm{(c_s)_{s\in B_{a/2}^c(0)}} \cdot \ell \cdot \norm{(c_s)_{s\in B_{a/2}^c(0)}}^{\ell-1}_{\abs{K}-1}\norm{(c_s)}_{\abs{K}-1}^{\abs{K}-1-\ell+1}\label{eq:induction}\\
&\leq (\ell+1)\onenorm{(c_s)}^{\abs{K}-\ell}\onenorm{(c_s)_{s\in B^c_{a/2}(0)}}^{\ell},\notag
\end{align}
where we used the induction hypothesis in the term \eqref{eq:induction}. This proves \eqref{eq:decayofsum}. It follows that for all $K\in\kappa$, the quantity of interest is
\[
\pi(K)\defeq \#\{B\in\pi\,|\, B\cap K\neq \emptyset\},
\]
the number of blocks of $\pi$ which intersect with $K$, which takes the role of $\ell$ in the analysis just conducted.
It then holds with \eqref{eq:decayofsum} that for any $\kappa\in\Pcal(k)_{\geq 2}\backslash\{\pi\}$,
\begin{align*}
\prod_{K\in\kappa} \sum_{s^K\in T} \prod_{i\in K}\abs{c_{t_i-s^K}} 
& \leq k^k \onenorm{(c_s)_s}^{\sum_{K\in\kappa}(\abs{K}-\pi(K)+1)}\onenorm{(c_s)_{s\in B^c_{a/2}(0)}}^{\sum_{K\in\kappa}(\pi(K)-1)}\\
& \leq k^k \onenorm{(c_s)_s}^{k}\onenorm{(c_s)_{s\in B^c_{a/2}(0)}}^{\sum_{K\in\kappa}(\pi(K)-1)}.
\end{align*}
Since we are interested in the decay we may guarantee when $\pi$ is given, we need a lower bound on $\sum_{K\in\kappa}(\pi(K)-1)$, where $\kappa\in\Pcal(k)_{\geq 2}\backslash\{\pi\}$. If $L(\pi)$ is a lower bound, we immediately obtain by the analysis in \eqref{eq:calc} that
\begin{equation}
\label{eq:MAinftyLowerBound}
\bigabs{\E X_{t_1}\cdots X_{t_k} - \prod_{B\in\pi} \E X_{t_B}}\leq 2M'_k\#\Pcal(k) k^k \onenorm{(c_s)_s}^{k} \onenorm{(c_s)_{s\in B^c_{a/2}(0)}}^{L(\pi)}.
\end{equation}
It remains to establish a lower bound $L(\pi)$ for $\sum_{K\in\kappa}(\pi(K)-1)$. First, we note that for each $K\in\kappa, \pi(K)\geq 1$. But since $\kappa\neq\pi$, there must be at least one block $K$ of $\kappa$ which intersects with two blocks of $\pi$, so then $\pi(K)\geq 2$. Therefore, a generally valid lower bound is $L(\pi)=1$, which entails at least one factor of decay $\onenorm{(c_s)_{s\in B^c_{a/2}(0)}}$. For example, if $\pi$ is a pair partition, $\pi=\{\{1,2\},\{3,4\},\{5,6\}\}$, say, and $\kappa=\{\{1,2,3,4\},\{5,6\}\}$, then $\sum_{K\in\kappa}(\pi(K)-1)=1$, so the minimal decay rate we just identified is actually assumed. On the other hand, if $\pi$ has $\ell$ single blocks, then it follows that $\sum_{K\in\kappa}(\pi(K)-1)\geq \ell/2$, since in the worst case, the singles of $\pi$ are grouped into pairs in $\kappa$, and the remaining blocks stay the same. As an example, consider $\pi=\{\{1\},\{2\},\{3\},\{4\},\{5,6\}\}$ and $\kappa=\{\{1,2\},\{3,4\},\{5,6\}\}$, which yields $\sum_{K\in\kappa}(\pi(K)-1)=2$, whereas if $\pi=\{\{1\},\{2\},\{3\},\{4\},\{5,6\}\}$ and $\kappa=\{\{1,2,3,4\},\{5,6\}\}$, we find $\sum_{K\in\kappa}(\pi(K)-1) =3$. Considering the preceding discussion and \eqref{eq:MAinftyLowerBound}, we find that condition (M4) holds with 
\begin{equation}
\label{eq:MAinftyM4}
C_k \defeq 2M'_k\#\Pcal(k) k^k \onenorm{(c_s)_s}^{k} \qquad \text{and}\qquad \gamma(a)\defeq \onenorm{(c_s)_{s\in B^c_{a/2}(0)}}.
\end{equation}

\begin{corollary}
\label{cor:MAInftyN}
Let $(c_s)_{s\in\Z}\in\ell^1(\Z)$ be arbitrary such that 
\begin{equation}
\label{eq:csDecayMAInftyN}
\sum_{s\in\Z}\abs{s}^k\abs{c_s} < \infty
\end{equation}
for all $k\in\N$. Further, let $(Y_t)_{t\in\Z}$ be a family of independent and standardized random variables, and define for all $t\in\N$:
\[
X_t \defeq \sum_{s\in\Z} c_s Y_{t-s}.
\]
Then
\begin{equation*}
\label{eq:CLTMAInftyN}
\frac{1}{\sqrt{n}}\sum_{t=1}^n X_t\	\xrightarrow[n\to\infty]{} \ \Ncal(0,\sigma^2) \qquad \text{weakly,}
\end{equation*}
where $\Ncal(0,\sigma^2)\defeq \delta_0$ if $\sigma^2 = 0$ and where
\begin{equation}
\label{eq:sigmalimitMAInftyN}
\sigma^2\ \defeq\  \lim_{n\to\infty}\frac{1}{n}\sum_{t,t'=1}^n\sum_{s\in\Z}c_{t-s}c_{t'-s}	\qquad \text{exists in $\R_+$.}
\end{equation}
\end{corollary}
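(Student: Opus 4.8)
The plan is to verify that the process $(X_t)_{t\in\N}$, together with the index spaces $T_n\defeq\{1,\ldots,n\}\subseteq\Z$ equipped with the metric $d(t,t')=\abs{t-t'}$, satisfies all hypotheses of Theorem~\ref{thm:CLT}; the weak convergence and the stated form of $\sigma^2$ then follow immediately. Note that here $\abs{T_n}=n$, so \eqref{eq:CLT} reads $n^{-1/2}\sum_{t=1}^n X_t$, as desired.

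First I would observe that conditions (M1), (M3) and (M4) are already supplied by the general MA($\infty$) analysis of Section~\ref{sec:MAinfty}: (M1) is $\E X_t=0$, (M3) holds with the bound $M_k$ from \eqref{eq:MAinftyFirstMoments}, and (M4) holds with $C_k$ and $\gamma(a)=\onenorm{(c_s)_{s\in B^c_{a/2}(0)}}$ as in \eqref{eq:MAinftyM4}; in particular $\gamma(a)\to 0$ since $(c_s)\in\ell^1(\Z)$. Condition (M2) is immediate because translation invariance gives $\E X_t^2=\sum_{s\in\Z}c_{t-s}^2=\sum_{s\in\Z}\abs{c_s}^2$, a constant in $t$, so the Cesàro average equals this same constant. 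The matching of $\sigma^2$ with \eqref{eq:sigmalimitMAInftyN} is then just the identity $\E X_tX_{t'}=\sum_{s}c_{t-s}c_{t'-s}$ inserted into \eqref{eq:sigmalimit}.

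The substantive part is the verification of (S1)--(S4). For (S2) and (S3) the arguments are elementary: a ball $B_a(t)\subseteq T_n$ is an interval of $\Z$ containing at most $2a+1\leq 3\abs{T_a}$ points, giving (S2) with $C=3$, $c=1$; and $\abs{T_n}=n\geq n$ together with $\diam(T_n)=n-1$ nondecreasing gives (S3). For (S1) I would exploit the decay hypothesis \eqref{eq:csDecayMAInftyN}: for any $m\in\N$ a Markov-type estimate gives $\gamma(n)=\sum_{\abs{s}>n/2}\abs{c_s}\leq (n/2)^{-m}\sum_{s}\abs{s}^m\abs{c_s}$, which is $O(n^{-m})$; choosing $m>k$ yields $\abs{T_n}^k\gamma(n)=O(n^{k-m})\to 0$. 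For (S4), given $\ell\in\N$, I would set $a_n\defeq\lfloor n^{1/\ell}\rfloor$, so that $a_n\nearrow\infty$, $a_n\in\{1,\ldots,n\}$, and $\abs{T_{a_n}}^\ell/\abs{T_n}=a_n^\ell/n\to 1$, hence lies in a fixed interval $(c',C')$ for all large $n$.

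I do not anticipate a genuine obstacle, since all the analytic difficulty -- establishing (M4) with an explicit $\gamma$ -- was already discharged in Section~\ref{sec:MAinfty}. The only point requiring care is (S1): one must confirm that the polynomial-moment summability \eqref{eq:csDecayMAInftyN} does force superpolynomial decay of the tail $\gamma$, which is exactly the Markov estimate above and is the sole place where hypothesis \eqref{eq:csDecayMAInftyN} (rather than mere $\ell^1$-membership) is used. With (M1)--(M4) and (S1)--(S4) all in place, Theorem~\ref{thm:CLT} delivers both the existence of $\sigma^2$ in $\R_+$ and the asserted weak convergence.
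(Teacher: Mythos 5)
Your proposal is correct and follows essentially the same route as the paper's own proof: both reduce the corollary to Theorem~\ref{thm:CLT} by importing (M1)--(M4) from the general MA($\infty$) analysis of Section~\ref{sec:MAinfty}, verify (S1) from \eqref{eq:csDecayMAInftyN} via the elementary bound $a^k\gamma(a)\leq 2^k a^{k-m}\sum_s\abs{s}^m\abs{c_s}$ (the paper phrases it as a vanishing tail sum, you as a Markov-type estimate with $m>k$ --- the same idea), note that (S2) and (S3) are trivial, and choose $a_n=\lfloor n^{1/\ell}\rfloor$ for (S4). Your additional explicit details (the constant $C=3$, $c=1$ in (S2), the constancy of $\E X_t^2$ for (M2)) only make the ``trivially'' steps of the paper explicit.
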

\begin{proof}
We must check if in this setup, conditions (M1) through (M4) and (S1) through (S4) hold. The moment conditions (M1), (M2) and (M3) hold with the findings in \eqref{eq:MAinftyFirstMoments}, and by \eqref{eq:MAinftyM4}, (M4) holds with $C_k = 2M'_k\#\Pcal(k) k^k \onenorm{(c_s)_s}^{k}$ and $\gamma(a)=\onenorm{(c_s)_{\abs{s} > a/2}}$, which converges to $0$ as $a\to\infty$. For (S1), note that $\lim_{a\to\infty}a^k\gamma(a)=0$ for all $k\in\N$ since
\[
a^k \sum_{\abs{s}>a/2} \abs{c_s} < \sum_{\abs{s}>a/2}2^k\abs{s}^k\abs{c_s} \xrightarrow[a\to\infty]{} 0,
\]
since $(\abs{s}^k\abs{c_s})_{s\in\Z}$ is summable by \eqref{eq:csDecayMAInftyN}. (S2), (S3) hold trivially, and for (S4) we can choose $a_n\defeq\lfloor\sqrt[\ell]{n}\rfloor$. The formula for $\sigma^2$ in \eqref{eq:sigmalimitMAInftyN} follows from \eqref{eq:MAinftyFirstMoments} and \eqref{eq:sigmalimit}.
\end{proof}

\begin{corollary}
\label{cor:MAInftyZd}
Let $(c_s)_{s\in\Z^d}\in\ell^1(\Z^d)$ be arbitrary such that 
\begin{equation}
\label{eq:csDecayMAInftyZd}
\sum_{s\in\Z^d}\supnorm{s}^k\abs{c_s} < \infty
\end{equation}
for all $k\in\N$. Further, let $(Y_t)_{t\in\Z^d}$ be a family of independent and standardized random variables, and define for all $t\in\N$:
\[
X_t = \sum_{s\in\Z^d} c_s Y_{t-s}.
\]
Then with $B_n(0)=\{z\in\Z^d\,|\,\supnorm{z}\leq n\}$,
\begin{equation*}
\label{eq:CLTCor3}
\frac{1}{\sqrt{(2n+1)^d}}\sum_{t\in B_n(0)} X_t\	\xrightarrow[n\to\infty]{} \ \Ncal(0,\sigma^2) \qquad \text{weakly,}
\end{equation*}
where $\Ncal(0,\sigma^2)\defeq \delta_0$ if $\sigma^2 = 0$ and where
\begin{equation}
\label{eq:sigmalimitCor3}
\sigma^2\ \defeq\  \lim_{n\to\infty}\frac{1}{(2n+1)^d}\sum_{t,t'\in B_n(0)}\sum_{s\in\Z^d}c_{t-s}c_{t'-s}	\qquad \text{exists in $\R_+$.}
\end{equation}
\end{corollary}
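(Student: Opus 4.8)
The strategy is to verify conditions (M1)--(M4) and (S1)--(S4) of Theorem~\ref{thm:CLT} for the index sets $T_n\defeq B_n(0)$, so that $\abs{T_n}=(2n+1)^d$, following the blueprint of the proof of Corollary~\ref{cor:MAInftyN}. Since $(X_t)$ is an MA($\infty$)-process, the moment conditions (M1), (M2), (M3) are immediate from the formulas in \eqref{eq:MAinftyFirstMoments}, and (M4) holds by \eqref{eq:MAinftyM4} with $C_k=2M'_k\#\Pcal(k)k^k\onenorm{(c_s)_s}^k$ and $\gamma(a)=\onenorm{(c_s)_{s\in B^c_{a/2}(0)}}=\sum_{\supnorm{s}>a/2}\abs{c_s}$, which tends to $0$ as $a\to\infty$ because $(c_s)\in\ell^1(\Z^d)$.

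For condition (S1) I would exploit the full strength of the moment hypothesis \eqref{eq:csDecayMAInftyZd}. Fix $k\in\N$ and choose any integer $m>dk$. On the tail $\supnorm{s}>n/2$ we have $1\le (2\supnorm{s}/n)^m$, hence
\[
\abs{T_n}^k\gamma(n)=(2n+1)^{dk}\sum_{\supnorm{s}>n/2}\abs{c_s}\ \le\ \frac{2^m(2n+1)^{dk}}{n^{m}}\sum_{\supnorm{s}>n/2}\supnorm{s}^{m}\abs{c_s}.
\]
Since $\sum_{s}\supnorm{s}^{m}\abs{c_s}<\infty$ by \eqref{eq:csDecayMAInftyZd}, the tail sum is bounded, while $(2n+1)^{dk}/n^{m}\to 0$ as $m>dk$. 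Thus (S1) holds.

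Conditions (S2) and (S3) are cardinality bookkeeping: a ball of radius $a$ in $\Z^d$ under $\supnorm{\cdot}$ contains at most $(2a+1)^d=\abs{T_a}$ points, so $b_n(a)\le\abs{T_a}$ and (S2) holds with $C=c=1$; moreover $\abs{T_n}=(2n+1)^d\ge n$ and $\diam(T_n)=2n$ is nondecreasing, giving (S3). For (S4), given $\ell\in\N$ I would set $a_n\defeq\lfloor n^{1/\ell}\rfloor$, which increases to infinity and lies in $\{1,\ldots,n\}$ for large $n$; then $\abs{T_{a_n}}^{\ell}/\abs{T_n}=(2a_n+1)^{d\ell}/(2n+1)^d$, and since $2a_n+1$ is of exact order $n^{1/\ell}$ the numerator is of order $n^{d}$, matching $(2n+1)^d$ up to multiplicative constants, so the ratio stays in some $(c',C')$ for all large $n$. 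Finally, formula \eqref{eq:sigmalimitCor3} for $\sigma^2$ is obtained by inserting $\E X_tX_{t'}=\sum_{s}c_{t-s}c_{t'-s}$ from \eqref{eq:MAinftyFirstMoments} into \eqref{eq:sigmalimit}.

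The argument is a direct multidimensional analogue of Corollary~\ref{cor:MAInftyN}, so I do not anticipate a genuine obstacle; the one point requiring care is (S1), where the polynomial factor $\abs{T_n}^k=(2n+1)^{dk}$ now grows like $n^{dk}$ rather than like $n^k$, so one must feed in a sufficiently high moment $m>dk$ of $(c_s)$ -- which is exactly what hypothesis \eqref{eq:csDecayMAInftyZd} supplies for every $k$.
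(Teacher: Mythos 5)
Your proposal is correct and follows essentially the same route as the paper's proof: both verify (M1)--(M4) via the general MA($\infty$) analysis of Section~\ref{sec:MAinfty}, both obtain (S1) from the tail hypothesis \eqref{eq:csDecayMAInftyZd} (the paper shows $a^k\gamma(a)\to 0$ for every $k$ and then absorbs the factor $(2a+1)^{dk}$, which is the same mechanism as your choice of $m>dk$), and both take $a_n=\lfloor n^{1/\ell}\rfloor$ for (S4). The remaining checks (S2), (S3) and the identification of $\sigma^2$ via \eqref{eq:MAinftyFirstMoments} and \eqref{eq:sigmalimit} match the paper's (implicit) bookkeeping exactly.
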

\begin{proof}
The proof is concluded analogously to the proof of Corollary~\ref{cor:MAInftyN}, with the following remarks: For (M4), the decay function now takes the form $\gamma(a) = \onenorm{(c_s)_{s\in B^c_{a/2}(0)}}$. Then for (S1), we observe that $\lim_{a\to\infty}a^k\gamma(a)=0$ for all $k\in\N$ (and thus $\lim_{a\to\infty}(2a+1)^{dk}\gamma(a)=0$ for all $k\in\N$) since
\[
a^k \sum_{\supnorm{s}>a/2} \abs{c_s} < \sum_{\supnorm{s}>a/2}2^k\supnorm{s}^k\abs{c_s} \xrightarrow[a\to\infty]{} 0,
\]
since $(\supnorm{s}^k\abs{c_s})_{s\in\Z^d}$ is summable by \eqref{eq:csDecayMAInftyZd}. For (S4), we may choose $a_n\defeq \lfloor \sqrt[\ell]{n}\rfloor$ again, since
\[
\frac{2\lfloor\sqrt[\ell]{n}\rfloor +1}{\sqrt[\ell]{2n+1}} \xrightarrow[n\to\infty]{} \frac{2}{\sqrt[\ell]{2}}.
\] 

\end{proof}

Having studied processes and random fields of the MA($\infty$) type, we will now introduce autoregressive moving average (ARMA($p,q$)) processes, the theory about which will yield that these can also be treated by our framework. To this end, let $(Y_t)_{t\in\Z}$ be independent with $\E Y_t=0$ and $\V Y_t =1$ and let $p,q\in\N, a_1,\ldots,a_p, b_1,\ldots,b_q\in\R$, so that
\begin{equation}
\label{eq:ARMA}
\forall\, t\in\Z: X_t +  \sum_{j=1}^p a_j X_{t-j} = Y_t + \sum_{j=1}^q b_jY_{t-j}.
\end{equation}
Define the polynomials in $z\in\C$:
\[
A(z)\defeq \sum_{j=0}^pa_jz^j \qquad \text{and} \qquad B(z)\defeq \sum_{j=0}^q b_jz^j
\]
and let $A_0(z)$ be the \emph{reduced polynomial} that is obtained from eliminating linear factors from $A$ which pertain to common roots of $A$ and $B$ and requiring $A_0(0)=1$ (if $A$ and $B$ do not have a common root, then $A_0=A$.) It can be shown (Theorem 7.4 in \cite{Kreiss:Neuhaus:2006}) that a process $(X_t)_t$ that satisfies \eqref{eq:ARMA} exists if and only if the polynomial $A_0$ has no root $z_0\in\C$ with $\abs{z_0}=1$. In this case, it further follows that $(X_t)$ has an MA($\infty$) representation of the form $X_t=\sum_{z\in\Z}c_zY_{t-z}$, where the sequence $(c_z)_z$ is real-valued and decays exponentially fast as $\abs{z}\to\infty$, cf.\ Theorem 7.7 and Remark 7.8 in \cite{Kreiss:Neuhaus:2006}. Therefore, Corollary~\ref{cor:MAInftyN} can be applied to $(X_t)$, yielding a CLT for these general classes of ARMA($p,q$) models.

\section{Mixing Random Variables}
\label{sec:Mixing}
Let $(X_n)_{n\in\N}$ be a sequence of real-valued random variables on some probability space $(\Omega,\Acal,\Prob)$. Define for all subsets $I\subseteq\N$ the following sub-$\sigma$-algebra:
\[
\Acal_I\defeq \sigma\left(X_i\, |\, \in I\right).
\]
Further, define for all $I,J\subseteq\N$ their \emph{distance} as
\[
\dist(I,J) \defeq \inf\{\abs{i-j}: i\in I, j\in J\}
\]
and interpret the statement $I<J$ as $\sup I < \inf J$.
Then $(X_n)_n$ is called $\alpha$-mixing, if there is a sequence $(\alpha_d)_d$ in $\R_+$ with $\lim_d\alpha_d=0$ such that for all $d\in\N$ and all $I<J\subseteq \N$ with $\dist(I,J)\geq d$:
\[
\forall\,  A\in\Acal_I, B\in\Acal_J: \abs{\Prob(A)\Prob(B) - \Prob(A\cap B)} \leq \alpha_d.
\]
Similarly, $(X_n)_n$ is called $\varphi$-mixing, if there is a sequence $(\varphi_d)_d$ in $\R_+$ with $\lim_d\varphi_d=0$ such that for all $d\in\N$ and all $I<J\subseteq \N$ with $\dist(I,J)\geq d$:
\[
\forall A\in\Acal_I, B\in\Acal_J: \abs{\Prob(A)\Prob(B) - \Prob(A\cap B)} \leq \Prob(A)\varphi_d.
\]
A $\varphi$-mixing sequence is obviously also $\alpha$-mixing with $\alpha_d\defeq\varphi_d$ for all $d\in\N$, but the converse need not be the case. There are even more mixing concepts, such as $\rho$- or $\beta$-mixing, all of which are stronger than $\alpha$-mixing.  More precise connections between different mixing types can be found in \cite{Bradley:2005}. For textbook treatments of $\alpha$-mixing resp.\ $\varphi$-mixing, see \cite{Billingsley:1995} resp.\ \cite{Billingsley:1968}. The concept of $\alpha$-mixing random variables was introduced by Rosenblatt in 1956 to derive a CLT for stationary sequences, see \cite{Rosenblatt:1956}. Although as mentioned, there are plenty of stronger mixing conditions, $\alpha$-mixing is already referred to as "the strong mixing condition". This has to do with the fact that there are also substantially weaker mixing concepts in ergodic theory called "mixing" and "ergodic". The connection is that $\alpha$-mixing implies mixing, which implies ergodic, see \cite{Klenke:2020} for details.

It is interesting to note that except for trivial processes such as i.i.d.\ or $m$-dependent processes,  MA($\infty$) processes are in general "highly non-$\alpha$-mixing":
\begin{example}
Let $(Y_k)_{k\in\N_0}$ be uniformly distributed on $\{0,\ldots,9\}$ and set 
\[
\forall\, n\in\N_0: X_n \defeq \sum_{k=0}^{\infty}\frac{1}{10^k}Y_{n+k}.
\]
In words, $X_n$ is the random decimal number
 $Y_{n} .  Y_{n+1}Y_{n+2}\ldots$. 
Then for any $k\in\N, \sigma(X_n)=\sigma(Y_{n+k}\,|\, k\geq 0)$. Therefore, $(X_n)_n$ does not have a chance to be $\alpha$-mixing. To see this, let $d\in\N$ be arbitrary and consider the events $A\in\sigma(X_0,\ldots,X_k)$ and $B\in\sigma(X_{k+d},\ldots,X_{k+d+\ell})$ with $A=\{\text{the $(k+d)$-th decimal place of $X_0$ equals } 5\}=\{Y_{k+d+1}=5\}$ and  $B=\{\text{the first decimal place of $X_{k+d}$ equals } 5\}=\{Y_{k+d+1}=5\}$, then
\[
\Prob(A\cap B)-\Prob(A)\Prob(B) = \frac{1}{10}-\frac{1}{100} = \frac{9}{100},
\]
and such events can be constructed for any $d\in\N$.
\end{example}
Although $\alpha$-mixing processes are generally very different from MA($\infty$) processes, it is readily seen that these fit within the framework Theorem~\ref{thm:CLT}. We need to establish the following lemma first.

\begin{lemma}
\label{lem:mixingboundstwo}
Let $Y_1,\ldots,Y_k$, $k\geq 2$ be random variables such that for all $\ell\in\oneto{k}$, $\E\abs{Y_{\ell}}^{4(k-1)}\leq M_{4(k-1)}$ for a constant $M_{4(k-1)}\in[1,\infty)$ and $Y_{\ell}$ is $\Acal_{I_{\ell}}$-measurable, where $I_{\ell} \subseteq\N$ and $I_1< I_2 < \ldots < I_k$. Define $d_{\ell,\ell+1}\defeq \dist(I_{\ell},I_{\ell+1})$ for all $\ell\in\oneto{k-1}$. Then
\[
\abs{\E Y_1\cdots Y_k\ - \ \E Y_1\cdots \E Y_k} \leq 24 M_{4(k-1)} \sum_{\ell=1}^{k-1}\sqrt{\alpha_{d_{\ell,\ell+1}}}.
\]
\end{lemma}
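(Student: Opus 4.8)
The plan is to reduce the $k$-fold decoupling to a telescoping sum of pairwise covariances and then estimate each covariance with the classical $\alpha$-mixing covariance inequality. First I would record the algebraic identity obtained by repeatedly peeling off the leftmost factor. Using $\E[Y_\ell\, W] = \E Y_\ell\,\E W + \Cov(Y_\ell, W)$ with $W=Y_{\ell+1}\cdots Y_k$, together with $\prod_{j=\ell}^{k}\E Y_j = \E Y_\ell \prod_{j=\ell+1}^{k}\E Y_j$, an induction on $\ell$ yields
\begin{equation*}
\E Y_1\cdots Y_k \ -\ \E Y_1\cdots \E Y_k \ =\ \sum_{\ell=1}^{k-1}\Big(\prod_{j=1}^{\ell-1}\E Y_j\Big)\,\Cov\big(Y_\ell,\,Y_{\ell+1}\cdots Y_k\big),
\end{equation*}
since the final remainder carries the factor $\E Y_k-\E Y_k=0$ and the empty product for $\ell=1$ is $1$. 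This isolates exactly $k-1$ covariances, one per consecutive gap, which is the source of the sum over $\ell\in\oneto{k-1}$ in the statement.

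Next I would estimate the $\ell$-th covariance. The crucial geometric point is that, because $I_1<I_2<\cdots<I_k$, the product $Y_{\ell+1}\cdots Y_k$ is $\Acal_{I_{\ell+1}\cup\cdots\cup I_k}$-measurable, and the infimum of $I_{\ell+1}\cup\cdots\cup I_k$ is $\inf I_{\ell+1}$; hence $\dist(I_\ell,\,I_{\ell+1}\cup\cdots\cup I_k)=\dist(I_\ell,I_{\ell+1})=d_{\ell,\ell+1}$. Thus $Y_\ell$ and $Y_{\ell+1}\cdots Y_k$ live in $\sigma$-algebras separated by $d_{\ell,\ell+1}$, and I can apply a covariance inequality for $\alpha$-mixing variables. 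Choosing conjugate exponents $p=q=4$, so that $1-\tfrac1p-\tfrac1q=\tfrac12$, produces precisely the rate $\sqrt{\alpha_{d_{\ell,\ell+1}}}$, giving
\begin{equation*}
\bigabs{\Cov\big(Y_\ell,\, Y_{\ell+1}\cdots Y_k\big)} \ \leq\ c\,\sqrt{\alpha_{d_{\ell,\ell+1}}}\;\norm{Y_\ell}_4\,\bignorm{Y_{\ell+1}\cdots Y_k}_4 .
\end{equation*}
I would obtain this either by citing Davydov's inequality (cf.\ \cite{Bradley:2005}) or, to keep the argument self-contained and to pin down the numerical constant, by truncating $Y_\ell$ and $Y_{\ell+1}\cdots Y_k$ at a level $L$, applying the elementary bound $\abs{\Cov(U,V)}\leq 4\alpha\,\supnorm{U}\supnorm{V}$ to the bounded parts, controlling the tails by Markov and Cauchy--Schwarz, and optimizing over $L$; this balancing generates both the exponent $\tfrac12$ and the constant ultimately absorbed into the leading $24$.

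It then remains to control the moment factors by the single uniform bound $M_{4(k-1)}$. The product $Y_{\ell+1}\cdots Y_k$ has at most $k-1$ factors, so generalized Hölder gives $\bignorm{Y_{\ell+1}\cdots Y_k}_4 \leq \prod_{j=\ell+1}^{k}\norm{Y_j}_{4(k-\ell)}$, and since $4(k-\ell)\leq 4(k-1)$, Lyapunov's inequality bounds each factor by $M_{4(k-1)}^{1/(4(k-1))}$; the same estimate bounds $\norm{Y_\ell}_4$ and every prefactor $\abs{\E Y_j}$. This is exactly why the hypothesis demands moments of order $4(k-1)$: it is the order needed to keep products of up to $k-1$ factors in $\Lcal^4$. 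Collecting exponents, the $\ell$-th summand is at most $c\,\sqrt{\alpha_{d_{\ell,\ell+1}}}\,M_{4(k-1)}^{\,k/(4(k-1))}$, and because $k\geq 2$ forces $k/(4(k-1))\leq \tfrac12$ while $M_{4(k-1)}\geq 1$, this is at most $c\,M_{4(k-1)}\sqrt{\alpha_{d_{\ell,\ell+1}}}$. Summing over $\ell\in\oneto{k-1}$ yields the claimed bound.

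The main obstacle is the covariance inequality with the correct $\sqrt{\alpha}$ rate; the rest is the telescoping algebra and Hölder/Lyapunov bookkeeping. If I prove the covariance bound by truncation rather than quoting it, the delicate part is choosing $L$ so that the bounded-part term $4\alpha L^2$ and the moment tails balance to give $\alpha^{1/2}$ (not a weaker power), and then verifying that the accumulated numerical constant does not exceed $24$. A secondary point to check carefully is the distance identity $\dist(I_\ell,\,I_{\ell+1}\cup\cdots\cup I_k)=d_{\ell,\ell+1}$, which rests entirely on the ordering $I_1<\cdots<I_k$ and would fail for unordered index blocks.
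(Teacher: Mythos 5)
Your proof is correct and follows essentially the same route as the paper: a telescoping decomposition into $k-1$ consecutive-gap covariances, an $\alpha$-mixing covariance inequality with fourth moments yielding the rate $\sqrt{\alpha_{d_{\ell,\ell+1}}}$, and H\"older/Lyapunov bookkeeping that uses $M_{4(k-1)}\geq 1$ to absorb every moment factor into a single power of $M_{4(k-1)}$. The only differences are cosmetic: you peel single factors off the left and bound $\Cov\bigl(Y_\ell,\,Y_{\ell+1}\cdots Y_k\bigr)$ via Davydov's inequality (whose standard constant $8$ or $12$ indeed stays below $24$), whereas the paper groups from the right and bounds $\Cov\bigl(Y_1\cdots Y_\ell,\,Y_{\ell+1}\E Y_{\ell+2}\cdots\E Y_k\bigr)$ via Billingsley's inequality (27.24), whose additive form $8\bigl(1+\E U^4+\E V^4\bigr)\sqrt{\alpha}$ produces the constant $24$ directly.
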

\begin{proof}
Using the inequality (27.24) in \cite{Billingsley:1995} in the second step, we calculate:
\begin{align*}
&\abs{\E Y_1\cdots Y_k\ - \ \E Y_1\cdots \E Y_k}\\
&\leq \sum_{\ell=1}^{k-1} \abs{\E Y_1\cdots Y_{\ell} (Y_{\ell+1}\E Y_{\ell+2}\cdots \E Y_{k})\quad -\quad \E Y_1\cdots Y_{\ell} (\E Y_{\ell+1}\cdots \E Y_{k})}\\
&\leq \sum_{\ell=1}^{k-1} 8 \left(1 + \E\left[ (Y_1\cdots Y_{\ell})^4\right] + \E\left[(Y_{\ell+1}\E Y_{\ell+2}\cdots \E Y_{k})^4\right]\right)\sqrt{\alpha_{d_{\ell,\ell+1}}}\\
&\leq \sum_{\ell=1}^{k-1} 8 \left(M_{4(k-1)} + \max_{i\in\oneto{\ell}}\E Y_i^{4\ell} + \max_{j\in\{\ell+1,\ldots,k\}}\E Y_{j}^{4(k-\ell)} \right)\sqrt{\alpha_{d_{\ell,\ell+1}}}\\
&\leq \sum_{\ell=1}^{k-1} 8 \left(M_{4(k-1)} + \max_{i\in\oneto{\ell}}\left(\E Y_i^{4(k-1)}\right)^{\frac{\ell}{k-1}} + \max_{j\in\{\ell+1,\ldots,k\}}\left(\E Y_{j}^{4(k-1)}\right)^{\frac{k-\ell}{k-1}}\right)\sqrt{\alpha_{d_{\ell,\ell+1}}},
\end{align*}
from which the statement immediately follows.
\end{proof}

\begin{corollary}
\label{cor:Mixing}
Let $(X_n)_n$ be a sequence of $\alpha$-mixing random variables with respect to a sequence $(\alpha_d)_{d\in\N}$ with $\lim_{d\to\infty}d^k\alpha_d=0$ for all $k\in\N$. If $(X_n)_n$ satisfies the moment conditions $(M1)$, $(M2)$ and $(M3)$, then
\begin{equation}
\sigma^2\defeq \lim_{n\to\infty}\frac{1}{n}\sum_{s,t=1}^n\E X_s X_t	\qquad \text{exists in $\R_+$}
\end{equation}
and
\begin{equation}
\frac{1}{\sqrt{n}}\sum_{t=1}^n X_t	\xrightarrow[n\to\infty]{} \Ncal(0,\sigma^2) \qquad \text{weakly,}
\end{equation}
where  $\Ncal(0,\sigma^2)\defeq \delta_0$ if $\sigma^2 = 0$.

\end{corollary}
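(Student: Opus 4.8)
The plan is to verify that the $\alpha$-mixing setup is a special case of the framework of Theorem~\ref{thm:CLT} and then invoke that theorem. I would take the index spaces to be $T_n \defeq \oneto{n} = \{1,\ldots,n\}$ equipped with the $\N_0$-valued metric $d(s,t)\defeq\abs{s-t}$, and set $X^{(n)}_t\defeq X_t$ for $t\in T_n$. With this choice $\abs{T_n}=n$ and $\diam(T_n)=n-1$, and the sum in the corollary is exactly $\abs{T_n}^{-1/2}\sum_{t\in T_n}X_t$, so that the conclusion of Theorem~\ref{thm:CLT}, together with \eqref{eq:sigmalimit} and $\abs{T_n}=n$, yields both the existence of $\sigma^2$ and the claimed CLT. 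It then remains only to check the eight conditions.

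The moment conditions (M1)--(M3) are assumed in the hypothesis, so the heart of the matter is (M4), which I would deduce from Lemma~\ref{lem:mixingboundstwo}. Fix $\ubar{t}=(t_1,\ldots,t_k)\in T_n^k$ and $a\in\N_0$, and let $B_1,\ldots,B_r$ be the blocks of $\pi(\ubar{t}\mid a)$. On $\Z$ with the metric $\abs{\cdot}$ the $a$-connected components occupy pairwise disjoint ranges separated by gaps exceeding $a$, so I can order the blocks so that the index sets $I_\ell\defeq\{t_i\mid i\in B_\ell\}$ satisfy $I_1<I_2<\cdots<I_r$ with $\dist(I_\ell,I_{\ell+1})\geq a+1>a$. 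Setting $Y_\ell\defeq\prod_{i\in B_\ell}X_{t_i}=X_{t_{B_\ell}}$, each $Y_\ell$ is $\Acal_{I_\ell}$-measurable and
\[
\E Y_1\cdots Y_r = \E X_{t_1}\cdots X_{t_k},\qquad \E Y_1\cdots\E Y_r = \prod_{B\in\pi(\ubar{t}\mid a)}\E X_{t_B}.
\]
By the generalized Hölder inequality and (M3), the moments $\E\abs{Y_\ell}^{4(r-1)}$ are bounded by a constant $\widetilde{M}_k\in[1,\infty)$ depending only on $k$ (uniformly in $n$ and $\ubar{t}$), since each $Y_\ell$ is a product of at most $k$ of the $X_{t_i}$. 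Lemma~\ref{lem:mixingboundstwo} then gives
\[
\bigabs{\E X_{t_1}\cdots X_{t_k} - \prod_{B\in\pi(\ubar{t}\mid a)}\E X_{t_B}} \leq 24\,\widetilde{M}_k\,(k-1)\sqrt{\alpha_a},
\]
using $r\leq k$, $\dist(I_\ell,I_{\ell+1})\geq a$, and the (w.l.o.g.) monotonicity of $(\alpha_d)_d$. Hence (M4) holds with a suitable $C_k$ and $\gamma(a)\defeq\sqrt{\alpha_a}\to 0$.

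Finally I would check the structural conditions, which are immediate for $T_n=\oneto{n}$. For (S1) I use $\abs{T_n}^k\gamma(n)=n^k\sqrt{\alpha_n}=\sqrt{n^{2k}\alpha_n}\to 0$, which follows from the hypothesis $d^{2k}\alpha_d\to 0$. Condition (S2) holds with $c=1$, $C=3$, since $b_n(a)\leq 2a+1\leq 3a=3\abs{T_a}$ for $a\geq 1$; (S3) holds because $\abs{T_n}=n\geq n$ and $\diam(T_n)=n-1$ is nondecreasing; and for (S4), given $\ell$, the choice $a_n\defeq\lfloor n^{1/\ell}\rfloor$ gives $\abs{T_{a_n}}^{\ell}/\abs{T_n}=\lfloor n^{1/\ell}\rfloor^{\ell}/n\to 1$, which lies eventually in a compact subinterval of $\R_+$. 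With all conditions verified, Theorem~\ref{thm:CLT} applies and delivers the statement. The main obstacle is the verification of (M4): one must correctly order the $a$-connected blocks along $\Z$, recognize the block-products $X_{t_{B_\ell}}$ as exactly the $\Acal_{I_\ell}$-measurable factors to which Lemma~\ref{lem:mixingboundstwo} applies, and control their high moments uniformly in $n$ and $\ubar{t}$; the remaining conditions are routine bookkeeping.
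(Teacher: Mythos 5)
Your proposal is correct and follows essentially the same route as the paper: reduce to Theorem~\ref{thm:CLT} with $T_n=\oneto{n}$, deduce (M4) from Lemma~\ref{lem:mixingboundstwo} applied to the block products $X_{t_B}$ (with generalized H\"older and (M3) controlling their high moments), set $\gamma(a)=\sqrt{\alpha_a}$, and check (S1)--(S4) directly. If anything, you are more careful than the paper, which silently assumes the linear ordering of the $a$-connected blocks along $\Z$ and the (w.l.o.g.) monotonicity of $(\alpha_d)_d$ that you make explicit.
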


\begin{remark}
When assuming stationarity (which we do not assume at any place in this paper), the statement of Corollary~\ref{cor:Mixing} is well-known in a much stronger version, where instead of the superpolynomial decay of $\alpha_d$, only a polynomial decay is assumed, Theorem 1.7 in \cite{Ibragimov:1962}. The strength of our Theorem~\ref{thm:CLT} thus lies in its universal applicability rather than its tightness in special cases.
\end{remark}
\begin{proof}[Proof of Corollary~\ref{cor:Mixing}]
We need to verify conditions (M1) through (M4) and (S1) through (S4) so that we can apply Theorem~\ref{thm:CLT}. Per assumption in the statement of Corollary~\ref{cor:Mixing}, (M1) through (M3) are satisfied. It is also immediate that (S2) through (S4) are satisfied. It remains to show (M4) and (S1). For (M4), let $\ubar{t}\in\oneto{n}^k$ and $d\in\oneto{n}$ be arbitrary, then 
\begin{equation*}
	\bigabs{\E X_{t_1}\cdots X_{t_k} - \prod_{B\in\pi(\ubar{t} \, | \, d)}\E X_{t_B}} \leq C_k\gamma (d)
\end{equation*}
where $\gamma(d)\defeq\sqrt{\alpha_d}$ and $C_k\defeq 24(k-1)M_{4(k-1)k}^{4(k-1)k}$, which follows with Lemma~\ref{lem:mixingboundstwo}, since for any $B\in\pi\defeq\pi(\ubar{t} \, | \, d)$ we obtain by a calculation as in \eqref{eq:genHolder} below that
\[
\E\abs{X_{t_B}}^{4(\#\pi-1)} = \E \prod_{i\in B}\abs{X_{t_i}}^{4(\#\pi-1)} \leq \max_{i\in B} \E\abs{X_{t_i}}^{4(\#\pi-1)\abs{B}} \leq M_{4(k-1)k}^{4(k-1)k}.
\]
Since by assumption, $(\alpha_d)_{d\in\N}$ satisfies $\lim_{d\to\infty}d^k\alpha_d=0$ for all $k\in\N$, we observe that (S1) is satisfied.
\end{proof}

\section{Proof of Theorem~\ref{thm:CLT}}
\label{sec:proof}

Inspecting \eqref{eq:sigmalimit}, since
\[
\frac{1}{\abs{T_n}}\sum_{s,t\in T_n}\E X_sX_t = \E\left(\frac{1}{\sqrt{\abs{T_n}}}\sum_{t\in T_n}X_t\right)^2\geq 0,
\]
the limit is non-negative in case it exists. To prove existence, note that
\begin{equation}
\label{eq:sumsplit}
\frac{1}{\abs{T_n}}\sum_{s,t\in T_n}\E X_sX_t = \frac{1}{\abs{T_n}}\sum_{t\in T_n}\E X_t^2 + \frac{1}{\abs{T_n}}\sum_{s\neq t \in T_n}\E X_sX_t,
\end{equation}
and by assumption (M2), the first sum on the r.h.s.\  of \eqref{eq:sumsplit} converges. To examine the second sum, we calculate, where $S_a(t)\subseteq T_n$ refers to the sphere of radius $a$ around $t$ in $T_n$ and the constant $C_2$ stems from condition (M4):
\begin{align*}
&\frac{1}{\abs{T_n}}\sum_{s\neq t\in T_n}\abs{\E X_sX_t}\ =\ \frac{1}{\abs{T_n}} \sum_{a=1}^{\diam(T_n)}\sum_{\substack{s,t\in T_n\\ d(s,t)=a}}\abs{\E X_s X_t}\\
& \leq\ \frac{C_2}{\abs{T_n}}\sum_{a=1}^{\diam(T_n)} \abs{\{(s,t)\in T_n^2\, |\, d(s,t)=a\}}\cdot\gamma(a)
\ =\ C_2\sum_{a=1}^{\diam(T_n)} \left(\frac{1}{\abs{T_n}}\sum_{t\in T_n} \abs{S_a(t)}\right)\cdot\gamma(a)\\
& \leq \ C_2\sum_{a=1}^{\diam(T_n)}b_n(a)\gamma(a)  \leq C_2\sum_{a=1}^{\diam(T_n)} C \abs{T_a}^c \gamma(a)
\end{align*}
which is summable as $n\to\infty$, since for all $a$ large enough, $\abs{T_a}^{c+2}\gamma(a)\leq 1$, such that $\abs{T_a}\gamma(a) \leq 1/\abs{T_a}^2 \leq 1/a^2$. Here, we used condition (S1), (S2) and (S3). This shows \eqref{eq:sigmalimit}. To prove the weak convergence statement in \eqref{eq:CLT}, we employ the method of moments. To this end we need to show that for all $k\in\N$:
\begin{equation}
\label{eq:goal}
\E\left(\frac{1}{\sqrt{\abs{T_n}}}\sum_{t
\in T_n}X_t\right)^k = \frac{1}{\abs{T_n}^{\frac{k}{2}}} \sum_{\ubar{t}\in T_n^k}\E X_{t_1}X_{t_2}\cdots X_{t_{k}} \ \xrightarrow[n\to\infty]{}\ (k-1)!!\, \sigma^k\, \one_{2\N}(k).
\end{equation}

To this end, we partition the set $T_n^k$ according to the \emph{$a$-connectedness} of the tuples which we introduced at the beginning of the paper. We define for any $a\in\N_0$ and $\pi\in\Pcal(k)$:
\[
T_n^k(\pi|a) \defeq \left\{\ubar{t}\in T_n^k\,|\, \forall\, B\in\pi: t_B\text{ is $a$-connected and } \forall\, B\neq B'\in\pi: d(t_B,t_{B'})>a\right\}.
\]
The following lemma helps to obtain bounds on the sets $T_n^k(\pi|a)$:
\begin{lemma}
\label{lem:combinatorics}
For all $\pi\in\Pcal(k)$ and $a\in\N_0$: 
\[
\abs{T_n^k(\pi|a)}\ \leq\ \prod_{B\in\pi}\abs{T_n} b_n(a)^{\abs{B}-1} \abs{B}!\ \leq\ k!\abs{T_n}^{\abs{\pi}}b_n(a)^{k-\abs{\pi}}.
\]
\end{lemma}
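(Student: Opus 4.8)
The plan is to prove the first inequality and then read off the second one for free, since $\sum_{B\in\pi}(\abs{B}-1)=k-\abs{\pi}$ and $\prod_{B\in\pi}\abs{B}!\le\big(\sum_{B\in\pi}\abs{B}\big)!=k!$. For the first inequality I would argue block by block. The crucial structural remark is that the separation requirement $d(t_B,t_{B'})>a$ for distinct blocks can only \emph{shrink} the set, so discarding it gives an upper bound: the set of $\ubar{t}\in T_n^k$ for which every $t_B$ is $a$-connected is precisely the Cartesian product, over the blocks $B\in\pi$, of the sets of $a$-connected families indexed by $B$, because coordinates in different blocks are then mutually unconstrained. Writing $N(B)$ for the number of $a$-connected families $(t_i\mid i\in B)$ in $T_n$, this yields $\abs{T_n^k(\pi|a)}\le\prod_{B\in\pi}N(B)$, and it remains to show $N(B)\le\abs{T_n}\,b_n(a)^{\abs{B}-1}\abs{B}!$ for a single block.

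To bound $N(B)$ for a block of size $m=\abs{B}$, I would reveal the points of an $a$-connected family one index at a time. Since $t_B$ is $a$-connected, the graph on $B$ joining $i$ and $j$ whenever $d(t_i,t_j)\le a$ is connected; fixing a spanning tree and listing its vertices so that every vertex after the first is preceded by a neighbour produces an ordering $i_1,\ldots,i_m$ of $B$ in which, for each $j\ge 2$, the point $t_{i_j}$ lies in the ball $B_a(t_{i_{j'}})$ of some earlier point $t_{i_{j'}}$. Counting along such an ordering then costs at most $\abs{B}!$ for the choice of ordering (which also pins down the earlier point each new coordinate attaches to), at most $\abs{T_n}$ for the position of the first point, and at most $b_n(a)$ for each of the remaining $m-1$ points, giving $N(B)\le\abs{T_n}\,b_n(a)^{\abs{B}-1}\abs{B}!$.

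Assembling the per-block estimates gives
\[
\abs{T_n^k(\pi|a)}\ \le\ \prod_{B\in\pi}\abs{T_n}\,b_n(a)^{\abs{B}-1}\abs{B}!\ =\ \abs{T_n}^{\abs{\pi}}\,b_n(a)^{k-\abs{\pi}}\prod_{B\in\pi}\abs{B}!,
\]
and the inequality $\prod_{B\in\pi}\abs{B}!\le k!$ converts the right-hand side into the second claimed bound $k!\,\abs{T_n}^{\abs{\pi}}b_n(a)^{k-\abs{\pi}}$.

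The main obstacle is the per-block count, and specifically keeping the combinatorial factor down to $\abs{B}!$: one must ensure that the chosen reveal order simultaneously determines which earlier point each new coordinate attaches to, so that the $j$-th coordinate genuinely contributes a single factor $b_n(a)$ rather than a union $\bigcup_{j'<j}B_a(t_{i_{j'}})$ over all earlier balls (a naive accounting that selects the attaching neighbour separately from the ordering would inflate the factor beyond $\abs{B}!$). It is worth noting that this precise constant is not essential downstream: any bound of the form $c_k\,\abs{T_n}^{\abs{\pi}}b_n(a)^{k-\abs{\pi}}$ with $c_k$ depending only on $k$ would serve equally well, since it is exactly the powers $\abs{T_n}^{\abs{\pi}}$ and $b_n(a)^{k-\abs{\pi}}$ that matter in the moment estimates. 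The block-factorization step, by contrast, is routine once one observes that dropping the inter-block separation constraint only enlarges the set.
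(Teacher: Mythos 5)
Your block-factorization step and the closing arithmetic ($\sum_{B\in\pi}(\abs{B}-1)=k-\abs{\pi}$ and $\prod_{B\in\pi}\abs{B}!\le k!$) are correct, but the per-block count has a genuine gap, located exactly at the step you flag and then assert away: a reveal ordering $i_1,\ldots,i_m$ of a block does \emph{not} determine which earlier point each new coordinate attaches to. The attachment pattern is an additional piece of data (an increasing tree on the reveal positions) that varies with the tuple no matter which canonical ordering you choose: a star configuration (one center point with three points pairwise more than $a$ apart but each within $a$ of the center, realizable already in $\Z^2$ with the sup metric) and a path configuration force different attachment trees, since $K_{1,3}$ and $P_4$ do not contain one another as spanning trees. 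Recording the attachments separately costs up to an extra factor $(m-1)!$; recording a canonical rooted spanning tree of the $a$-adjacency graph instead costs $m^{m-2}$ by Cayley's formula. Moreover, this is not a fixable presentation issue: the inequality with the constant $\prod_{B\in\pi}\abs{B}!$ is \emph{false} in general. Take $T_n$ to be a $4$-regular graph of girth greater than $2k+2$ (such graphs exist by Erd\H{o}s--Sachs), equipped with the graph metric, $a=1$ (so $b_n(a)=5$), and $\pi=\{\oneto{k}\}$: every ball of radius $k$ is then a tree, the number of $1$-connected $k$-element subsets is at least $\abs{T_n}k^{-2}\binom{3k}{k-1}$, which grows like $\abs{T_n}(27/4)^k/\mathrm{poly}(k)$, and hence the number of $1$-connected $k$-tuples with pairwise distinct entries is at least $k!\,\abs{T_n}(27/4)^k/\mathrm{poly}(k)$; this exceeds the claimed bound $k!\,\abs{T_n}\,5^{k-1}$ once $k$ is large (numerically, $k\approx 30$ suffices). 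You are in good company, though: the paper's own proof suffers from the same defect in a cruder form, since it chooses each new point ``in the $a$-proximity of the previous one,'' which presumes that the $a$-adjacency graph of each block has a Hamiltonian path; the star configuration is $a$-connected but admits none. Your spanning-tree formulation is the correct structural fact, and your diagnosis points at precisely the spot where both arguments break.

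The repair is the observation you make at the very end, made quantitative. Encode an $a$-connected tuple $(t_i)_{i\in B}$ by (i) a canonical rooted spanning tree of its $a$-adjacency graph, rooted at $\min B$ (at most $\abs{B}^{\abs{B}-2}$ choices), (ii) the location of the root point (at most $\abs{T_n}$ choices), and (iii) for each non-root index, the position of its point inside the ball of radius $a$ around its parent's point (at most $b_n(a)$ choices each). This map is injective, since the tuple can be reconstructed from the data top-down through the tree, so
\[
\abs{T_n^k(\pi|a)}\ \le\ \prod_{B\in\pi}\abs{T_n}\,b_n(a)^{\abs{B}-1}\,\abs{B}^{\abs{B}-2}\ \le\ k^k\,\abs{T_n}^{\abs{\pi}}\,b_n(a)^{k-\abs{\pi}}.
\]
As you correctly point out, every downstream use of the lemma (the Case 2 estimate and \eqref{eq:squareseparation}, \eqref{eq:firstequality} in the proof of Theorem~\ref{thm:CLT}) only requires a bound of the form $c_k\abs{T_n}^{\abs{\pi}}b_n(a)^{k-\abs{\pi}}$ with $c_k$ depending on $k$ alone, so replacing $k!$ by $k^k$ leaves the theorem intact. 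But as written, neither your argument nor the paper's proves the lemma with the stated constant, and no argument can, because that constant is wrong.
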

\begin{proof}
Given $\pi\in\Pcal(k)$ and $a\in\N_0$, we construct an upper bound on the number of possibilities we have to construct a $\ubar{t}\in	 T_n^k(\pi|a)$? First, we order the blocks in $\pi$ in increasing order, that is, we write $\pi=(B_1,\ldots,B_{\ell})$, where $\ell$ is the number of blocks in $\pi$ and $1\in B_1$, $\min(\oneto{k}\backslash B_1) \in B_2$, $\min(\oneto{k}\backslash (B_1\cup B_2)) \in B_3$ and so forth. Then we proceed successively for each block $B_i$ as follows, starting with block $B_1$: We choose $t_{\min B_i}\in T_n$, having at most $\abs{T_n}$ choices. Then, continuing in increasing order through the indices of the block $B_i$, we choose the next $t_j$, $j\in B_i$, in the $a$-proximity of the previous one, admitting at most $b_n(a)$ choices. At last, we permute the tuple $(t_j\,|\, j\in B_i)$, for which we have $\abs{B}!$ choices. This shows the first inequality, and the second inequality is trivial.
\end{proof}

We proceed to show \eqref{eq:goal}. To this end, let $k\in\N$ be fixed throughout the remainder of the proof. For each $\pi\in\Pcal(k)$, we analyze the limit of
\[
\frac{1}{\abs{T_n}^{\frac{k}{2}}} \sum_{\ubar{t}\in T_n^k(\pi|a_n)}\E X_{t_1}X_{t_2}\cdots X_{t_{k}}
\]
for $n\to\infty$, where $a_n$ is a dynamic distance parameter with $a_n\nearrow\infty$, more precisely, we choose $(a_n)_n$ as in condition (S4) with $\ell\defeq\lceil c(k+1)\rceil +1$, where the constant $c$ stems from condition (S2).

\noindent\underline{Case 1: $\pi$ has at least one single block.}\newline
In this case, we note that by condition (M4), 
\[
\forall\,\ubar{t}\in T_n^k(\pi|a_n):\ \abs{\E X_{t_1}\cdots X_{t_k}}\leq C_k\gamma(a_n).
\]
Therefore, using the trivial bound $\abs{T_n^k(\pi|a_n)}\leq \abs{T_n}^k$  and condition (S1) we find
\[
\frac{1}{\abs{T_n}^{\frac{k}{2}}} \sum_{\ubar{t}\in T_n^k(\pi|a_n)}\abs{\E X_{t_1}\cdots X_{t_k}}\leq \abs{T_n}^{\frac{k}{2}}\cdot C_k\gamma(a_n) = \frac{\abs{T_n}^{\frac{k}{2}}}{\abs{T_{a_n}}^{\frac{\ell k}{2}}}\abs{T_{a_n}}^{\frac{\ell k}{2}}
C_k\gamma(a_n)\xrightarrow[n\to\infty]{} 0,
\]
where we used conditions (S1) and (S4).

\noindent\underline{Case 2: $\forall\, B\in\pi:\ \abs{B}\geq 2$ and $\exists\, B\in\pi$: $\abs{B}\geq 3$.}\newline
In this case, we find by Lemma~\ref{lem:combinatorics} that
\[
\abs{T_n^k(\pi|a_n)} \leq k! \abs{T_n}^{\frac{k-1}{2}} b_n(a_n)^{\frac{k+1}{2}}.
\]
Now for each $\ubar{t}\in T_n^k(\pi|a_n)$ we calculate
\begin{equation}
\label{eq:genHolder}
\abs{\E X_{t_1}\cdots X_{t_k}} \leq \left(\E \abs{X_{t_1}}^k\right)^{\frac{1}{k}}\cdots\left(\E \abs{X_{t_k}}^k\right)^{\frac{1}{k}} \leq \max_{\ell=1,\ldots,k}\E \abs{X_{t_{\ell}}}^k \leq M_k,
\end{equation}
where we used condition (M3), and where we used the generalized Hölder inequality in the first step. Combining these bounds we find
\begin{equation}
\label{eq:thirdmomentdecay}
\frac{1}{\abs{T_n}^\frac{k}{2}}\sum_{\ubar{t}\in T_n^k(\pi|a_n)}\abs{\E X_{t_1}\cdots X_{t_k}} \leq M_k \cdot k! \cdot \frac{b_n(a_n)^{\frac{k+1}{2}}}{\abs{T_n}^{\frac{1}{2}}} \xrightarrow[n\to\infty]{} 0,
\end{equation}
since by the choice of $\ell$ and conditions (S2), (S3) and (S4),
\[
b_n(a_n)^{k+1} \leq C^{k+1}\abs{T_{a_n}}^{c(k+1)} \leq C^{k+1}\abs{T_{a_n}}^{\ell - 1} = o(\abs{T_n}).
\]

\noindent\underline{Case 3: $\forall\, B\in\pi:\ \abs{B}= 2$.}

 It remains to investigate the partial sum over the set $T_n^k(\pi|a_n)$ for $\pi\in\Pcal\Pcal(k)\defeq\{\pi\,|\, \pi \text{ is a pair partition of } \oneto{k}\}$. If $k$ is odd, then this set is empty, so our above analysis shows that odd moments converge to zero, that is, we established that \eqref{eq:goal} holds for odd $k$. In the following, we assume $k$ to be even and calculate
\begin{align}
&\frac{1}{\abs{T_n}^{\frac{k}{2}}} \sum_{\ubar{t}\in T_n^k(\pi|a_n)}\E X_{t_1}X_{t_2}\cdots X_{t_{k}}\notag\\
&= \frac{1}{\abs{T_n}^{\frac{k}{2}}} \sum_{\ubar{t}\in T_n^k(\pi|a_n)} \left(\E X_{\ubar{t}} - \prod_{B\in\pi}\E X_{t_B}\right)\ +\ \frac{1}{\abs{T_n}^{\frac{k}{2}}} \sum_{\ubar{t}\in T_n^k(\pi|a_n)}\prod_{B\in\pi}\E X_{t_B}.\label{eq:RestGood}
\end{align}
For the first term in \eqref{eq:RestGood} we calculate
\begin{equation}
\label{eq:squareseparation}
\bigabs{\frac{1}{\abs{T_n}^{\frac{k}{2}}} \sum_{\ubar{t}\in T_n^k(\pi|a_n)} \left(\E X_{\ubar{t}} - \prod_{B\in\pi}\E X_{t_B}\right)}\leq k!\,b_n(a_n)^{\frac{k}{2}}C_k\gamma(a_n) \xrightarrow[n\to\infty]{} 0,
\end{equation}
where we used condition (M4), Lemma~\ref{lem:combinatorics} and condition (S1).
For the second term in \eqref{eq:RestGood} we calculate
\begin{align}
\frac{1}{\abs{T_n}^{\frac{k}{2}}} \sum_{\ubar{t}\in T_n^k(\pi|a_n)}\prod_{B\in\pi}\E X_{t_B} &= \frac{1}{\abs{T_n}^{\frac{k}{2}}}\sum_{\substack{\ubar{t}\in T_n^k\\ \forall\, B\in\pi: \diam(t_B)\leq a_n}} \prod_{B\in\pi}\E X_{t_B} + o(1)\label{eq:firstequality}\\
&=  \prod_{B\in\pi}\left(\frac{1}{\abs{T_n}}\sum_{\substack{t_B\in T_n^2\\ \diam(t_B)\leq a_n}}\E X_{t_B}\right) + o(1) \notag\\
&=  \prod_{B\in\pi}\left(\frac{1}{\abs{T_n}}\sum_{t_B\in T_n^2}\E X_{t_B} \right) + o(1)	\xrightarrow[n\to\infty]{} (\sigma^2)^{\frac{k}{2}}, \label{eq:thirdequality}
\end{align}
where the equality in \eqref{eq:firstequality} follows from the fact that for any $\ubar{t}\in T_n^k$ for which all $t_B$, $B\in\pi$, are $a_n$-connected but not all of them pairwise $a$-disconnected among each other, it holds that $\pi(\ubar{t} \, | \, a_n)$ is of the category of Case 2. Therefore, \eqref{eq:firstequality} follows since for such $\ubar{t}$, $\abs{T_n^k(\pi(\ubar{t} \, | \, a_n)|a_n)} =o(\abs{T_n}^{k/2})$ by Lemma~\ref{lem:combinatorics} and $\E\abs{X_sX_t}\leq M_2$. The equality leading to \eqref{eq:thirdequality} follows with
\begin{align*}
&\bigabs{\frac{1}{\abs{T_n}}\sum_{t_B\in T_n^2}\E X_{t_B}\ -\ \frac{1}{\abs{T_n}}\sum_{\substack{t_B\in T_n^2 \\ \diam(t_B)\leq a_n}} \E X_{t_B}} \\
&\leq \frac{1}{\abs{T_n}}\sum_{\substack{s,t\in T_n^2 \\ d(s,t)>a_n}} \bigabs{\E X_sX_t} \leq \abs{T_n}C_2\gamma(a_n) = \frac{\abs{T_n}}{\abs{T_{a_n}}^{\ell}}\abs{T_{a_n}}^{\ell}C_2\gamma(a_n)\xrightarrow[n\to\infty]{} 0,
\end{align*}
where we used conditions (S1), (S4) and (M4).
Since $\pi\in\Pcal\Pcal(k)$ was arbitrary and $\#\Pcal\Pcal(k)=(k-1)!!$ we obtain
\[
\sum_{\pi\in\Pcal\Pcal(k)}\frac{1}{\abs{T_n}^{\frac{k}{2}}} \sum_{\ubar{t}\in T_n^k(\pi|a_n)}\E X_{t_1}X_{t_2}\cdots X_{t_{k}} \xrightarrow[n\to\infty]{} (k-1)!!\sigma^k
\]
which finishes the proof of Theorem~\ref{thm:CLT}.

\vspace{1cm}
\noindent\textsf{(Michael Fleermann and Werner Kirsch)\newline
FernUniversit\"at in Hagen\newline
Fakult\"at f\"ur Mathematik und Informatik\newline 
Universit\"atsstra\ss e 1\newline 
58084 Hagen}\newline
\textit{E-mail addresses:}\newline
\texttt{michael.fleermann@fernuni-hagen.de}\newline
\texttt{werner.kirsch@fernuni-hagen.de}
\vspace{1cm}

\end{document}